\renewcommand*{\d}{\mathop{}\!d}
\DeclareMathOperator*{\id}{Id}
\author{Petar Melentijević}
\thanks{The author is partially supported by MPNTR grant 174017, Serbia}
\title[On seminorms of the weighted Berezin transform]{On the Bloch-type seminorms of the weighted Berezin transform}
\date{}
\address{Faculty of Mathematics \endgraf University of Belgrade \endgraf Studentski trg 16 \endgraf 11000 Beograd, Serbia}
\email{petarmel@matf.bg.ac.rs}
\subjclass[2010]{Primary 45P05; Secondary 31A10}
\keywords{Berezin transform, Operator norm, Norm estimate, Bloch space}
\newtheorem*{theorem*}{Theorem}
\newtheorem*{corollary*}{Corollary}
\newtheorem*{lemma*}{Lemma}
\newtheorem{theorem}{Theorem}
\newtheorem{lemma}{Lemma}
\theoremstyle{definition}
\theoremstyle{remark}
\begin{document}
\begin{abstract}
  We consider a weighted Berezin transform in the unit ball $\mathbb{B}^n \subset \mathbb{C}^n:$
  $$ B_{\alpha} : L^{\infty} (\mathbb{B}^n) \to \ \mathcal{B},\quad  \alpha>-1,$$
  defined, for $f \in L^{\infty} \left( \mathbb{B}^n \right)$ and $z \in \mathbb{B}^n$, by
  $$ (B_{\alpha} f) (z) = c_{\alpha} \int_{\mathbb{B}^n} \frac{\left( 1-|z|^2
    \right)^ {n+1}}{|1 - \langle z, w \rangle|^{2n+2}} f(w) \left( 1-|w|^2
  \right)^\alpha \d v(w),$$
  where $c_{\alpha} = \frac{\Gamma(\alpha+n+1)}{\Gamma(\alpha+1)\pi^n}$ , $v$
  is the Lebesque measure and $\mathcal{B}$ is $C^1$ Bloch-type space. We prove that
  $B_{\alpha}$ is bounded iff $\alpha \geq0$ and give the exact complex Bloch seminorm of $ B_\alpha$ for $0\leq\alpha \leq 2n+3.$ We also consider real Bloch seminorm and give sharp estimate for $0 \leq \alpha \leq n+\frac{1}{2}.$
\end{abstract}

\maketitle

\section{Introduction}

We use the notation from Rudin's monograph \cite{Rud}. Throughout the
paper $n$ is a positive integer. We denote the inner product in $\mathbb{C}^{n}$
by:
$$\langle z, w \rangle = z_1 \overline{w}_1 + z_2\overline{w}_2 + \dots
+ z_n \overline{w}_n,$$
where $z,w \in \mathbb{C}^n$. This inner product induces the Euclidean norm
$$ |z| = \sqrt{\langle z,z \rangle}.$$
Let $\mathbb{B}^n$ be the unit ball $\{z \in \mathbb{C}^n | |z| < 1 \}$ and  $\mathbb{B}_n$ its real counterpart. Let $e_1,e_2,\dots e_n$ be the standard base for $\mathbb{C}^n$.

We let $v$ be the volume measure in $\mathbb{C}^n$. We will also consider a
class of weighted volume measures on $\mathbb{B}^n$. For $\alpha > -1$ we define
a finite measure $v_{\alpha}$ on $\mathbb{B}^n$ by
$$\d v_{\alpha} (z) = c_{\alpha} \left(1- |z|^2 \right)^{\alpha}\d v(z),$$
where $c_{\alpha}$ is a normalizing constant such that $v_{\alpha}(\mathbb{B}^n)
=1$. Using polar coordinates, one can easily calculate that
\begin{equation}
    c_{\alpha} = \frac{\Gamma(\alpha+n+1)}{\Gamma(\alpha+1)\pi^n}.
\end{equation}

We will often use the following automorphisms of $\mathbb{B}^n$:
\begin{equation}
  \varphi_z (\xi) =  \frac{1}{1 - \langle \xi , z \rangle}
  \left( z - \frac{\langle \xi, z \rangle}{|z|^2} z -
    \left( 1 - |z|^2 \right)^{\frac{1}{2}} \left(\xi  -
      \frac{\langle \xi , z \rangle}{|z|^2} z \right) \right).
\end{equation}
Observe that $\varphi_z (0) = z,$ and since $ \varphi_z$ is involutive i.e. $\varphi_z
\circ \varphi_z = \id_{\mathbb{B}^n},$ we also have $\varphi_z(z)=0$. We will also use
the following identities
\begin{align}
  1-|\varphi_z(\xi)|^2 &= \frac{\left(1 - |z|^2 \right)
                      \left(1- |\xi|^2 \right)}{|1 - \langle z, \xi \rangle|^2}, \\
  1 - |z|^2 &= (1 - \langle z, \xi \rangle)
              \left(1 - \left\langle z, \varphi_z(\xi) \right\rangle\right),
\end{align}
for $z$, $\xi \in \mathbb{B}^n$, see \cite{Rud} for more details.

The real Jacobian of $\varphi_z$ is given by
$$ (J_{\mathbb{R}}  \varphi_z)(\xi) =
\left(\frac{(1 - |z|^2)}{|1 - \langle z, \xi \rangle|^2}\right)^{n+1}, \quad
z,\xi \in \mathbb{B}^n.$$
For a function $f \in C^1(\mathbb{B}^n)$, we define  complex gradients
\begin{align*}
  \nabla_{z} f(z) &= \left(\frac{\partial f(z)}{\partial z_1},
                    \frac{\partial f(z)}{\partial z_2}, \dots,
                    \frac{\partial f(z)}{\partial z_n} \right), \\
  \nabla_{\overline{z}} f(z) &= \left(\frac{\partial f(z)}{\partial \overline{z_1}},
                               \frac{\partial f(z)}{\partial \overline{z_2}},
                               \dots,
                               \frac{\partial f(z)}{\partial \overline{z_n}} \right), \\
  \intertext{and real gradient}
  \nabla f(z) &= \left(\frac{\partial f}{\partial x_1},
                \frac{\partial f}{ \partial y_1},
                \dots, \frac{\partial f}{\partial x_n},
                \frac{\partial f}{ \partial y_n} \right),
\end{align*}
where $z_k = x_k+ iy_k$, $k = \overline{1,n}$.
 
It is not hard to see that
$$ |\nabla f|^2 = 2\left(|\nabla_{z} f|^2 + |\nabla_{\overline{z}} f|^2\right).$$

Let us define a $C^1$ Bloch-type space as:
$$\displaystyle \mathcal{B}= \{ f \in C^1(\mathbb{B}^n)| \|f\|_*=\sup_{|z|<1}
(1-|z|^2)|\nabla f(z)| < +\infty \}.$$
 This is a real variable analogue of the
classical analytic Bloch space. $\mathcal{B}$ is a Banach space with the norm
$$\| f \|_ {\mathcal{B}} = |f(0)|+ \|f\|_* ,\quad f\in \mathcal{B}. $$
As the standard reference for the Bloch space we refer to \cite{Tim1} and \cite{Tim2}.

Berezin transform is an integral operator acting on functions defined on
the unit ball $\mathbb{B}^n \subset \mathbb{C}^n$. For a function $ f:
\mathbb{B}^n \to \mathbb{C}$ its Berezin transform is a new function
$$Bf : \mathbb{B}^n \to \mathbb{C}$$
defined at a point $z\in \mathbb{B}^n$ by
$$ (Bf) (z) = \int_{\mathbb{B}^n} \frac{(1 - |z|^2)^{n+1}}{|1 - \langle z, w
  \rangle|^{2n+2}} f(w) \d v(w). $$

Berezin \cite{Ber} introduced the notion of covariant and contravariant symbols
of an operator. Successfull applications of Berezin transform are so far mainly
in the study of Hankel and Toeplitz operators. It should be pointed out that the
Berezin transform is an analogue of the Poisson transform in Bergman space
theory, see \cite{Zhu}. Recent results concerning the norm of the Berezin
transform on $L^p (\mathbb{B}^n)$ can be found in Dostanić's and Marković's
papers \cite{Dos} and \cite{Mark1}.

Along with the $L^p-$norm estimates, in recent years there has been increased interest in exploring the magnitude of certain operators in terms of other operator norms. Here, we consider Berezin transform as $L^{\infty} \to \mathcal{B}$ operator. The method developed in \cite{Petar} is used in the present setting as well.In part this work is motivated by papers \cite{Kal} and \cite{Per}, where analogous problems were investigated for the Bergman projection.

Let $f \in L^{\infty} (\mathbb{B}^n)$ and let consider a slightly general situation -- weighted Berezin transform on
$\mathbb{B}^n$ , i.e.
\begin{equation}
  (B_{\alpha} f) (z) =  \int_{\mathbb{B}^n} \frac{(1- |z|^2)^ {n+1}}
  {|1 - \langle z, w \rangle|^{2n+2}} f(w) \d v_{\alpha} (w),\quad  f \in L^{\infty}(\mathbb B^n),z \in \mathbb B^n.
\end{equation}

We investigated two seminorms of the operator $B_\alpha:$
$$ \|B_\alpha\|_{*,\mathbb{C}}= \sup\limits_{\|f\|_{\infty} \leq 1} {(1-|z|^2)|\nabla_z(B_\alpha f)(z)| }$$
and
$$ \|B_\alpha\|_*= \sup\limits_{\|f\|_{\infty} \leq 1} {\|B_\alpha f \|_*}.
$$

Now we state the main results of this paper.
\begin{theorem}
Let $\alpha > -1$ ,  $B_{\alpha}$ is an operator defined by $(5)$ and $ f \in
L^{\infty}(\mathbb{B}^n).$ Then the following holds:
\begin{enumerate}[label=\roman*)]
\item If $0 \leq \alpha \leq 2n+3$ and $ \|f\|_\infty= 1$ , then
  \begin{align*}
    \|B_\alpha\|_{*,\mathbb{C}}= c_{\alpha} (n+1)
    \int_{\mathbb{B}^n} |\zeta_1| \left(1 - |\zeta|^2 \right)^{\alpha} \d
    v(\zeta) = \\
    = \frac{n+1}{2}B(n+ \alpha+1,\frac{1}{2}),
  \end{align*}
    where the equality is attained if and only if  $f(w)= C \dfrac{|\langle w,a \rangle|}{\overline{\langle w,a \rangle}} $ for some $a \in \mathbb{C}^n, |a|=1$ and $C$ is an unimodular constant.
   
    \item If $-1< \alpha < 0$, then $\|B_\alpha\|_{*,\mathbb{C}} = + \infty.$
    \item If $\alpha>2n+3$ and $\|f\|_{\infty}=1$, then
      $$\|B_\alpha\|_{*,\mathbb{C}} < \, (n+1)\Gamma(n+\alpha +1)
      \binom{\alpha+ k_{\alpha} - 1}{k_{\alpha}}
      \frac{\Gamma{\left(k_{\alpha} + \frac{3}{2} \right)}}
      {\Gamma{\left(k_{\alpha} + \alpha + n + \frac{3}{2}\right)}},$$
      where
      $$k_{\alpha} = \left\lceil \frac{\alpha - (2n+3)}{2n+2} \right\rceil $$
      and $\lceil x \rceil$ is the smallest non-negative integer not smaller than $x$.

The same estimates holds for conjugate derivative seminorm.
      
  \end{enumerate}
\end{theorem}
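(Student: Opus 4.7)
The plan is to compute $\nabla_z(B_\alpha f)(z)$ explicitly, dualize against $\|f\|_\infty \leq 1$, and then apply the Möbius substitution $w = \varphi_z(\zeta)$, reducing the entire problem to a single sharp scalar integral inequality. Differentiating in (5) under the integral sign, applying the product rule to $(1-|z|^2)^{n+1}$ and to $|1-\langle z,w\rangle|^{-2n-2}$, and rewriting the outcome via $1-\langle z,\varphi_z(w)\rangle = (1-|z|^2)/(1-\langle z,w\rangle)$ (which follows from (4)), one obtains, for every unit $a \in \mathbb{C}^n$,
\[
(1-|z|^2)\sum_k a_k \frac{\partial (B_\alpha f)(z)}{\partial z_k} = (n+1)\int_{\mathbb{B}^n} \mathcal{K}_\alpha(z,w)\bigl[(1-\langle z,\varphi_z(w)\rangle)\langle a,w\rangle - \langle a,z\rangle\bigr] f(w)\,dv(w),
\]
where $\mathcal{K}_\alpha(z,w) = c_\alpha (1-|z|^2)^{n+1}(1-|w|^2)^\alpha / |1-\langle z,w\rangle|^{2n+2} \geq 0$. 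Because this kernel is non-negative, the supremum over $\|f\|_\infty \leq 1$ of the modulus of the right-hand side equals the $L^1(dv)$-norm of the bracketed kernel, attained by the unimodular $f$ whose phase is opposite to that of the kernel.

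The second step is the Möbius substitution $w = \varphi_z(\zeta)$. The transformation formula (3) together with the Jacobian identity in the introduction yields
\[
\mathcal{K}_\alpha(z,w)\,dv(w) = c_\alpha\, \frac{(1-|z|^2)^\alpha(1-|\zeta|^2)^\alpha}{|1-\langle z,\zeta\rangle|^{2\alpha}}\,dv(\zeta),
\]
while substituting (2) into $\langle a, \varphi_z(\zeta)\rangle$ and simplifying through $\overline{1 - \langle \zeta,z\rangle} = 1-\langle z,\zeta\rangle$ produces the clean identity
\[
(1-\langle z,\zeta\rangle)\langle a, \varphi_z(\zeta)\rangle - \langle a,z\rangle = -\langle b,\zeta\rangle, \qquad b := P_z a + \sqrt{1-|z|^2}\,Q_z a,
\]
where $P_z$ (resp.\ $Q_z = I - P_z$) is the orthogonal projection onto $\mathbb{C}z$ (its orthogonal complement), and $|b| \leq |a| = 1$. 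The seminorm is thereby reduced to bounding
\[
\Phi(z,a) := (n+1) c_\alpha (1-|z|^2)^\alpha \int_{\mathbb{B}^n} \frac{(1-|\zeta|^2)^\alpha |\langle b,\zeta\rangle|}{|1-\langle z,\zeta\rangle|^{2\alpha}}\, dv(\zeta)
\]
uniformly over $z \in \mathbb{B}^n$ and unit $a$.

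For part (i) one must show $\Phi(z,a) \leq \Phi(0,e_1) = (n+1)c_\alpha \int_{\mathbb{B}^n} |\zeta_1|(1-|\zeta|^2)^\alpha\,dv(\zeta)$ throughout the range $0 \leq \alpha \leq 2n+3$. By $U(n)$-symmetry one may assume $z = te_1$ and $a \in \mathrm{span}\{e_1, e_2\}$, so $b = (r, \sqrt{1-t^2}\sqrt{1-r^2}, 0, \ldots, 0)$ for some $r \in [0,1]$. Expanding $(1-t^2)^\alpha$ and $|1-t\bar\zeta_1|^{-2\alpha}$ as power series in $t$ and integrating term by term via the standard beta formulas for $\int_{\mathbb{B}^n}\zeta_1^p\bar\zeta_1^q(1-|\zeta|^2)^\alpha dv$ converts the required inequality into a coefficient-sign check for the resulting double series in $t^2$ and $r$. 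The main obstacle is precisely this sign analysis: one has to verify that every non-constant coefficient is non-positive, and the threshold $\alpha = 2n+3$ enters exactly through a monotonicity property of certain Pochhammer ratios. Computing $\Phi(0,e_1)$ in polar coordinates then gives $\frac{n+1}{2}B(n+\alpha+1, \frac{1}{2})$, and the equality case of the dual argument identifies the extremizer $f(w) = C|\langle w,a\rangle|/\overline{\langle w,a\rangle}$ with $|C| = 1$, $|a| = 1$. For part (ii), taking $z = te_1$, $a = e_1$ (so $b = e_1$) makes the prefactor $(1-t^2)^\alpha$ blow up as $t \to 1^-$ while the dual integral stays bounded, since $\alpha < 0$ keeps $|1-t\bar\zeta_1|^{-2\alpha}$ integrable against $(1-|\zeta|^2)^\alpha$. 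For part (iii), the same power-series argument fails at level $k_\alpha$; truncating the series there and estimating the tail by a single beta integral recovers the explicit upper bound stated in the theorem.
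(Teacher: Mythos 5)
Your reduction is correct and is essentially the one the paper itself uses: the explicit formula for $\langle \nabla_z K(z,w),a\rangle$, the duality step identifying the supremum over $\|f\|_\infty\leq 1$ with the $L^1$-norm of the kernel, the substitution $w=\varphi_z(\zeta)$ via identities (2)--(4), and the resulting quantity $\Phi(z,a)$ with $b=P_z a+\sqrt{1-|z|^2}\,Q_z a$ all check out (the paper writes the same vector with the projections spelled out and then splits it by a triangle inequality before maximizing over the direction). Your arguments for part (ii) (prefactor $(1-t^2)^\alpha$ blows up while the integral stays bounded below) and the general shape of part (iii) are likewise consistent with the paper.

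The gap is that the entire analytic core of part (i) --- the step that produces the threshold $\alpha\leq 2n+3$ and makes the constant sharp --- is announced but not carried out. You reduce to $\Phi(te_1,a)\leq\Phi(0,e_1)$ and then say one ``has to verify that every non-constant coefficient is non-positive''; that is exactly where all the work lies, and as stated it is not even clearly the right formulation: after multiplying the series $\sum_k c_k t^{2k}$, with $c_k$ proportional to $\binom{\alpha+k-1}{k}^2\Gamma(k+\frac{3}{2})/\Gamma(k+\alpha+n+\frac{3}{2})$, by $(1-t^2)^\alpha$, the coefficients of the product are alternating-sign combinations with no evident sign. The paper instead needs two concrete monotonicity facts that you neither state nor prove: (a) after a unitary reduction, the sequence $a_{j,k}=\binom{k}{j}\Gamma(k-j+1)\Gamma(j+\frac{3}{2})$ is increasing in $j$, which shows the supremum over the direction of $a$ is attained at $a=e_1$ (i.e.\ $b$ parallel to $z$); and (b) the sequence $a_k=\binom{\alpha+k-1}{k}\Gamma(k+\frac{3}{2})/\Gamma(k+\alpha+n+\frac{3}{2})$ is decreasing in $k$ precisely when $\alpha\leq 2n+3$, which gives the termwise bound $c_k\leq a_0\binom{\alpha+k-1}{k}$ whose comparison series sums to $a_0(1-t^2)^{-\alpha}$ and cancels the prefactor exactly. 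Without (a) and (b) (or substitutes), the inequality $\Phi(te_1,a)\leq\Phi(0,e_1)$, the appearance of $2n+3$, and the bound in (iii) --- which in the paper comes from the maximal term $a_{k_\alpha}$ of the same sequence rather than from ``truncating and estimating a tail'' --- remain unproved; the verification that $f(w)=|w_1|/\overline{w_1}$ actually attains the value at $z=0$ also requires the short explicit computation the paper performs.
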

  
  \begin{theorem}
  	For $f$ real valued, under the same hypothesis as in the Theorem 1, we have:
  	\begin{enumerate}[label=\roman*)]
  		\item If $0 \leq \alpha \leq n+\frac{1}{2}$ and $ \|f\|_\infty= 1$ , then
  		\begin{align*}
  			 \|B_\alpha\|_*=2c_{\alpha} (n+1)
  			\int_{\mathbb{B}^n} | Re \zeta_1| \left(1 - |\zeta|^2 \right)^{\alpha} \d
  			v(\zeta) = \\
  			= \frac{2}{\pi}(n+1)B(\alpha+n+1,\frac{1}{2}),
  		\end{align*}
  		where the equality is attained if and only if  $f(w)= \dfrac{| Re \langle w,a \rangle|}{ Re\langle w,a \rangle} $ for some $a \in \mathbb{C}^n, |a|=1.$
  		
  		\item If $-1< \alpha < 0$, then $ \|B_\alpha\|_* = + \infty.$
  		\item If $\alpha>n+\frac{1}{2}$ and $\|f\|_{\infty}=1$, then
  		$$ \|B_\alpha\|_* < \frac{2}{\sqrt{\pi}}(n+1)\binom{2k'_{\alpha}+2\alpha-1}{2k'_{\alpha}} {\binom{k'_{\alpha}+\alpha-1}{k'_{\alpha}}}^{-1} \frac{(k'_{\alpha})!\Gamma(\alpha+n+1)}{\Gamma(k'_{\alpha}+\alpha+n+\frac{3}{2})},$$
  		where
  		$$k'_{\alpha} = \left\lceil \frac{\alpha}{2n+1}-\frac{1}{2}  \right\rceil $$
  		and $\lceil x \rceil$ is the smallest non-negative integer not smaller than $x$.
  		
  	\end{enumerate}
  \end{theorem}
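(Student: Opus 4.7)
My strategy mirrors the proof of Theorem~1, but the real part of each complex quantity must be tracked throughout, since $f$ is restricted to be real. Because $B_\alpha f$ is then real-valued, $|\nabla(B_\alpha f)(z)| = 2|\nabla_z(B_\alpha f)(z)|$, and a standard duality argument for real $f$ with $\|f\|_\infty \le 1$ gives, for every unit $a \in \mathbb{C}^n$,
$$\sup_{f \in L^\infty_{\mathbb{R}},\,\|f\|_\infty \le 1} |a \cdot \nabla_z(B_\alpha f)(z)| = c_\alpha \int_{\mathbb{B}^n} |\operatorname{Re}(a \cdot \nabla_z K(z,w))|(1-|w|^2)^\alpha\,dv(w),$$
where $K(z,w) = (1-|z|^2)^{n+1}/|1-\langle z,w\rangle|^{2n+2}$ and the unimodular phase is absorbed into $a$. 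Differentiating $K$, substituting $w = \varphi_z(\zeta)$, and invoking identities (3) and (4), the product collapses to
$$(1-|z|^2)(a \cdot \nabla_z K(z,w))(1-|w|^2)^\alpha\,dv(w) = -(n+1)\langle a, M_z \zeta\rangle\,\frac{(1-|z|^2)^\alpha(1-|\zeta|^2)^\alpha}{|1-\langle z,\zeta\rangle|^{2\alpha}}\,dv(\zeta),$$
where $M_z = P_z + \sqrt{1-|z|^2}\,Q_z$ and $P_z, Q_z$ are the orthogonal projections onto $\mathbb{C}z$ and its orthogonal complement. Consequently,
$$\|B_\alpha\|_* = 2c_\alpha(n+1)\sup_{z \in \mathbb{B}^n} \sup_{|a|=1} \int_{\mathbb{B}^n} |\operatorname{Re}\langle a, M_z\zeta\rangle|\,\frac{(1-|z|^2)^\alpha(1-|\zeta|^2)^\alpha}{|1-\langle z,\zeta\rangle|^{2\alpha}}\,dv(\zeta).$$

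For part (i), the lower bound is attained at $z=0$, where $M_0 = \id$: the sign extremizer $f(w) = |\operatorname{Re}\langle w,a\rangle|/\operatorname{Re}\langle w,a\rangle$ saturates the duality, yielding $2c_\alpha(n+1)\int|\operatorname{Re}\zeta_1|(1-|\zeta|^2)^\alpha\,dv(\zeta)$. Slicing in $\mathbb{R}^{2n}$ along $x_1$ reduces this to standard Beta/Gamma integrals and produces the stated value $(2/\pi)(n+1)B(\alpha+n+1,1/2)$. The matching upper bound reduces to showing that the $(z,a)$-supremum above is attained at $z=0$ when $0 \le \alpha \le n+1/2$. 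After rotating $z$ to $re_1$ and exploiting the unitary symmetry of the measure in the $(\zeta_2,\ldots,\zeta_n)$ block, one may restrict to $a = (a_1,a_2,0,\ldots,0)$ with $a_1,a_2 \ge 0$; this turns the inequality into a two-parameter real-variable estimate in $r$ and $a_1/a_2$. Polar integration in each $\zeta_k$ reduces the problem further to a monotonicity statement for an explicit Beta-function expression. The threshold $\alpha \le n+1/2$ is expected to appear precisely as the range in which a critical partial derivative retains its sign; this is the main technical obstacle and parallels the role of $2n+3$ in Theorem~1.

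For part (ii), I test against $f \equiv 1$ (or a boundary-concentrated sign pattern along a single ray) and let $z \to \partial\mathbb{B}^n$ along $re_1$; the factor $(1-|w|^2)^\alpha$ with $\alpha < 0$ produces a boundary singularity strong enough to defeat the $(1-|z|^2)$ prefactor, forcing $(1-|z|^2)|\nabla(B_\alpha f)(z)| \to \infty$. For part (iii), when $\alpha > n+1/2$, I would expand $|1-\langle z,\zeta\rangle|^{-2\alpha}$ as a double binomial series in $\langle z,\zeta\rangle$ and $\overline{\langle z,\zeta\rangle}$, integrate term-by-term against $|\operatorname{Re}\zeta_1|$ (the real part contributing a ratio of binomial coefficients originating from $\int_0^{2\pi}|\cos\theta|e^{i(p-q)\theta}\,d\theta$), truncate the series at $k'_\alpha = \lceil\alpha/(2n+1)-1/2\rceil$, and control the tail by a geometric comparison; the specific choice of $k'_\alpha$ being precisely what forces the common ratio strictly below $1$, producing the stated strict upper bound.
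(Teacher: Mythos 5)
Your reduction is sound as far as it goes: the identity $|\nabla(B_\alpha f)|=2|\nabla_z(B_\alpha f)|$ for real-valued $B_\alpha f$, the duality giving $\int|\operatorname{Re}(a\cdot\nabla_zK)|\,dv_\alpha$ with the sign function as extremizer, the collapse under $w=\varphi_z(\zeta)$ to $\langle a,M_z\zeta\rangle$ with $M_z=P_z+\sqrt{1-|z|^2}\,Q_z$, and the reduction to $z=re_1$ all agree with the paper. But the entire content of parts (i) and (iii) --- that the supremum over $r$ is attained at $r=0$ precisely when $\alpha\le n+\tfrac12$, and the explicit constant when $\alpha>n+\tfrac12$ --- is exactly the step you defer (``is expected to appear'', ``the main technical obstacle''). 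Nothing in the proposal establishes the threshold $n+\tfrac12$, so the theorem is not proved.

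The route you sketch for that step also would not go through as written. You cannot reduce to $a=(a_1,a_2,0,\dots,0)$ with $a_1,a_2\ge 0$ real and ``a two-parameter estimate in $r$ and $a_1/a_2$'': the weight $|1-r\zeta_1|^{-2\alpha}$ singles out the real axis in the $\zeta_1$-plane, so the phase of $a_1$ is an essential extra parameter and polar integration in $\zeta_1$ does not yield a clean Beta-function expression. The paper's device is different: split off the $\zeta_1'$-component by the triangle inequality, use $|1-r(\zeta_1\xi_1+\zeta_2\xi_2)|\ge 1-r\operatorname{Re}(\zeta_1\xi_1+\zeta_2\xi_2)$ to pass to $\mathbb{R}^{2n}$, rotate so the real linear form involves only $\mu_1,\mu_2$, expand $(1-r\mu_1\nu_1-r\mu_2\nu_2)^{-2\alpha}$ in a single real binomial series, and then prove two monotonicity lemmas: the coefficients $b_{j,k}=\binom{2k}{2j}\binom{k}{j}^{-1}\Gamma(k-j+1)\Gamma(j+\tfrac12)$ decrease in $j$, so the inner sum collapses via $(\nu_1^2+\nu_2^2)^k=1$, and the resulting sequence $b_k=\binom{2\alpha+2k-1}{2k}\binom{\alpha+k-1}{k}^{-1}k!/\Gamma(k+\alpha+n+\tfrac32)$ is decreasing in $k$ iff $\alpha\le n+\tfrac12$ (a one-line ratio test, which is where the threshold actually comes from). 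Your mechanism for (iii) is likewise off: there is no truncation or geometric tail comparison; one bounds every $b_k$ by the maximal term $b_{k'_\alpha}$ of the unimodal sequence and resums $\sum_k\binom{\alpha+k-1}{k}r^{2k}=(1-r^2)^{-\alpha}$, which cancels the prefactor $(1-r^2)^{\alpha}$ exactly and produces the stated constant. Finally, for (ii) the unboundedness is read off from the already-computed series for the sign-extremal $f$ (a product of two increasing functions of $r$, one of them $(1-r^2)^\alpha\to\infty$); your test function $f\equiv1$ would need a separate asymptotic analysis of $\nabla(B_\alpha 1)$ that you do not supply.
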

  
\section{Proof of the Theorem 1}

In order to find partial derivatives of $B_{\alpha} f(z)$ we need formulae for
$\frac{\partial K}{\partial z_i}$ where
$K(z, w) = \dfrac{(1 - |z|^2)^{n+1}}{|1 - \langle z, w \rangle|^{2n+2}}$. Since
$$K(z,w) = \frac{(1 - |z|^2)^{n+1}}{|1 - \langle z, w \rangle|^{2n+2}}
= \frac{ \left(1 - \sum{z_i \overline{z}_i} \right)^{n+1}}{ \left(1 - \sum {z_i
      \overline{w}_i} \right)^{n+1} \left( 1 - \sum{\overline{z}_i w_i} \right)^{n+1}},$$
we have
\begin{align*}
  \frac{\partial K}{\partial z_i}(z,w) &= \frac{1}
                                         {\left(1 - \sum \overline{z}_i w_i \right)^{n+1}}
                                         \frac{\partial}{\partial z_i}{
                                         \left(\frac{1 - \sum{z_i \overline{z}_i}}
                                         {1 - \sum {z_i \overline{w}_i}} \right)^{n+1}} \\
                                       &= \frac{n+1}
                                         {\left(1 - \sum{\overline{z}_i w_i} \right)^{n+1}}
                                         \left( \frac{1 - \sum{z_i \overline{z}_i}}
                                         {1 - \sum {z_i \overline{w}_i}} \right)^{n}
                                         \frac{\partial}{\partial z_i}{
                                         \left(\frac{1 - \sum{z_i \overline{z}_i}}
                                         {1 - \sum{z_i\overline{w}_i}} \right)} \\
                                       &= \frac{(n+1) (1- |z|^2)^n
                                         \left( -\overline{z}_i
                                         \left(1 - \sum{z_i \overline{w}_i} \right)
                                         + \overline{w}_i
                                         \left(1 -\sum{z_i \overline{z}_i} \right)\right)}
                                         {\left(1 - \sum{w_i \overline{z}_i} \right)^{n+1} \left(1 - \sum{z_i \overline{w}_i} \right)^n
                                         \left(1 - \sum{z_i \overline{w}_i} \right)^2} \\
                                       &= \frac{(n+1)(1-|z|^2)^n}
                                         {(1 - \langle z,w \rangle)^{n+2}
                                         (1 - \overline{\langle z , w \rangle})^{n+1}}
                                         \left( (1 - |z|^2)\overline{w}_i
                                         - (1 - \langle z, w \rangle)\overline{z}_i \right).
\end{align*}
Therefore:
\begin{gather*}
\begin{split}
  |\nabla_z (B_{\alpha}f)(z)| &= \sup_{ |\xi|=1}
                                |\langle \nabla_z (B_{\alpha}f)(z), \xi \rangle| \\
                              &= \sup_{ |\xi|=1}
                                \left| \int_{\mathbb{B}^n} \langle \nabla_{z} K (z,w)f(w), \xi  \rangle \d v_{\alpha}(w) \right| \\
                              & \leq \sup_{|\xi|=1} \int_{\mathbb{B}^n}
                              |\langle \nabla_z K(z,w), \xi \rangle| \, |f(w)|
                              \d v_{\alpha} (w) =
                            \end{split} \\
                            \resizebox{0.99\textwidth}{!}{%
                             $\displaystyle = \sup_{ |\xi|=1} \int_{\mathbb{B}^n}
                              \left| \sum_{i=1}^n \frac{(n+1)(1- |z|^2)^n \left(
                                  (1-|z|^2)\overline{w}_i\overline{\xi}_i - (1-
                                  \langle z, w \rangle) \right)\overline{z}_i\overline{\xi}_i }{(1 -
                                  \sum{\overline{z}_i w_i})^{n+1} (1 -
                                  \sum{z_i \overline{w}_i})^{n+2}} \right|
                              |f(w)| \d v_{\alpha}(w)$} \\
                            \resizebox{\textwidth}{!}{%
                              $\displaystyle = \sup_{ |\xi|=1} \int_{\mathbb{B}^n} \frac{(n+1)(1-|z|^2)^n}{|1 - \langle z, w \rangle|^{2n+3}} \left|
                                (1-|z|^2)\langle \xi, w \rangle - (1-\langle z,w
                                \rangle)\langle \xi,z \rangle \right| |f(w)| \d
                              v_{\alpha} (w).$}
\end{gather*}

    Let us denote $S(z)=(1- |z|^2)|\nabla_{z} (B_{\alpha}f) (z)|.$ For $\|f\|_{\infty} \leq 1$,we obtained:
\begin{align*}
    S(z) \leq (n+1)\sup_{ |\xi|=1}\int_{\mathbb{B}^n}
    &\frac{(1-|z|^2)^{n+1}}{|1- \langle z, w \rangle|^{2n+3}} \cdot {} \\ 
    &{} \cdot \big| (1- |z|^2)\langle \xi , w \rangle - (1 - \langle z , w
    \rangle)\langle \xi, z \rangle  \big| \d v_{\alpha} (w).
\end{align*}
    In the above integral, we introduce new variable: $\zeta = \varphi_{z} (w)$
    (or: $w = \varphi_z (\zeta)$) and this gives:
    \begin{align*}
      S(z) \leq (n+1) c_\alpha \sup_{ |\xi|=1} \int_{\mathbb{B}^n}
      &\frac{|(1 - |z|^2)\langle \xi, \varphi_z(\zeta) \rangle - (1 - \langle z,
        \varphi_{z} (\zeta)\rangle)\langle \xi, z \rangle|}{|1- \langle z ,
        \varphi_{z} (\zeta)\rangle|} \cdot {} \\
      &{} \cdot (1-|\varphi_z(\zeta)|^2)^{\alpha} \d v(\zeta).
      \end{align*}
    Using $(4)$ we obtain
    $$\frac{(1 - |z|^2) \langle \xi , \varphi_z(\zeta) \rangle}{1 - \langle z , \varphi_z(\zeta)\rangle} = (1 - \langle z, \zeta \rangle) \langle \xi, \varphi_z(\zeta)  \rangle, $$
    therefore
    \begin{multline*}
      \bigg| \frac{(1 - |z|^2)\langle \xi, \varphi_z (\zeta)  \rangle - (1 -
        \langle z, \varphi_z (\zeta)   \rangle) \langle \xi, z \rangle}{1 - \langle
        z, \varphi_z (\zeta) \rangle}       \bigg| = \\
    = \big| (1 - \langle z, \zeta \rangle) \langle \xi, \varphi_{z}(\zeta) \rangle - \langle \xi, z \rangle    \big| = 
     \langle \xi, (1 - \overline{ \langle z, \zeta \rangle}) \varphi_{z} (\zeta) -
     z\rangle.
     \end{multline*}
    Next, $(2)$ gives us:
    $$ (1 - \overline{\langle z , \zeta \rangle}) \varphi_{z} (\zeta) = z - \frac{\langle \zeta, z \rangle}{|z|^2} z - \sqrt{1 - |z|^2}(\zeta - \frac{\langle \zeta, z \rangle}{|z|^2}z), $$
    and therefore:
    \begin{multline*}
      \bigg| \frac{(1 - |z|^2)\langle \xi, \varphi_z (\zeta)  \rangle - (1 -
        \langle z, \varphi_z (\zeta)   \rangle) \langle \xi, z \rangle}{1 - \langle
        z, \varphi_z (\zeta) \rangle}       \bigg| =\\
    =\big| \langle \xi, \frac{\langle \zeta, z\rangle}{|z|^2}z + \sqrt{1 -
      |z|^2}(\zeta - \frac{\langle \zeta , z \rangle}{|z|^2}z)    \rangle \big|
    \end{multline*}
    for $z \neq 0$;
     for $z=0$ this expression is equal to
    $$ \big| \langle \xi, (1  - \overline{\langle z, \zeta \rangle} \varphi_z (\zeta) - z \rangle)    \big| = |\langle \xi, \zeta   \rangle|. $$
    Now having all this in mind, we have:
    \begin{align*}
     S(z) \leq (n+1)c_\alpha \sup_{ |\xi|=1} \int_{\mathbb{B}^n} &\left| \langle \xi , \frac{\langle \zeta, z\rangle}{|z|^2}z + \sqrt{1 - |z|^2} (\zeta - \frac{\langle \zeta, z \rangle}{|z|^2}z)\rangle     \right| \cdot {} \\
      &{} \cdot (1 - |\varphi_z(\zeta)|^2)^{\alpha} \d v(\zeta)
      \end{align*}
    
    Now, using $(3)$ we get:
    \begin{equation}
      \begin{aligned}
      S(z) \leq (n+1) c_\alpha \sup_{ |\xi|=1} \int_{\mathbb{B}^n} &\big|\langle \xi , \frac{\langle \zeta, z\rangle}{|z|^2}z + \sqrt{1 - |z|^2} (\zeta - \frac{\langle \zeta, z \rangle}{|z|^2}z)\rangle \big| \cdot {} \\ 
      &{} \cdot \frac{(1- |z|^2)^{\alpha}(1- |\zeta|^2)^{\alpha}}{|1 - \langle z. \zeta  \rangle|^{2\alpha} } \d v(\zeta)
      \end{aligned}
    \end{equation}
    for $z \neq 0$ or, for $z=0$:
    $$ S(z) \leq (n+1) c_\alpha \sup_{ |\xi|=1} \int_{\mathbb{B}^n} |\langle \xi, \zeta  \rangle|(1- |\zeta|^2)^{\alpha} \d v(\zeta) $$
    Without loss of generality, we can assume $z=(r,0,\dots,0)$, where $0 \leq r<1.$
    \par This gives us:
    $$S(z) \leq (n+1) c_{\alpha} T(r), $$
    where
\begin{align*}
T(r) &=  \sup_{ |\xi|=1} \int_{\mathbb{B}^n} \left| \vphantom{\sqrt{1-r^2}} \langle \xi, (\zeta_1, 0, \dots , 0) + {} \right. \\
    &\hspace{8em} \left. {}+\sqrt{1-r^2}(0, \zeta_2,\dots,\zeta_{n}) \rangle \right| \frac{(1-r^2)^{\alpha}(1 - |\zeta|^2)^{\alpha}}{|1-r\zeta_1|^{2\alpha} } 
    \d v(\zeta) \\
     &= \max_{ |\xi|=1} (1-r^2)^{\alpha} \int_{\mathbb{B}^n} \left|\langle \xi, \sqrt{1-r^2}\zeta + {} \right. \\
  &\hspace{12em} \left. {} + (1-\sqrt{1-r^2})\zeta_1^{'} \rangle \right| \frac{(1- |\zeta|^2)^{\alpha}}{{|1-r\zeta_1|^{2\alpha}}} \d v(\zeta),
\end{align*}
    where $\zeta_1^{'} = (\zeta_1, \dots, 0)$ and $\zeta_1$ is the first  complex coordinate of $\zeta = (\zeta_1, \zeta_2,\dots, \zeta_n) \in \mathbb{B}^n$.
 
    \par We have:
    \begin{equation}\begin{aligned}
      \big| \langle \xi, \sqrt{1-r^2}\zeta +(1-\sqrt{1-r^2})\zeta_{1}^{'} \rangle  \big| \\
    &\hspace{-4em}\leq \sqrt{1-r^2} | \langle \xi, \zeta \rangle \big| + (1-\sqrt{1-r^2})\big| \langle \xi, \zeta_{1}^{'} \rangle \big|. 
    \end{aligned}\end{equation}
  \par So, for all $r \in [0,1)$ we have, using triangle inequality $(7)$
  \begin{equation}\begin{split}
      &\begin{split}
        T(r)=(1-r^2)^{\alpha} \max_{ |\xi|=1} \int_{\mathbb{B}^n } \big|\langle \xi, \sqrt{1-r^2}\zeta + (1-\sqrt{1-r^2})\zeta_1^{'} \rangle \big| \cdot {} \\
        {} \cdot \frac{(1- |\zeta|^2)^{\alpha}} {\big| 1 - r\zeta_{1}   \big|^{2\alpha}}\d v(\zeta)\end{split} \\
    &\quad\begin{split}
    &\leq (1-\sqrt{1-r^2})(1-r^2)^{\alpha} \max_{ |\xi|=1} \int_{\mathbb{B}^n} \big| \langle \xi, \zeta_1^{'}  \rangle \big| \frac{(1-|\zeta|^2)^{\alpha}}{\big|1- r\zeta_1  \big|^{2\alpha}}\d v(\zeta) + {} \\
     &\quad {} + \sqrt{1-r^2}(1-r^2)^{\alpha} \max_{ |\xi|=1} \int_{\mathbb{B}^n} \big| \langle \xi, \zeta \rangle  \big| \frac{(1 -|\zeta|^2)^{\alpha}}{\big| 1 - r\zeta_1  \big|^{2\alpha}}\d v(\zeta).\end{split} 
     \end{split}\end{equation}
    The first integral in $(8)$ is easy to estimate, because $ \big|\langle \xi,
    \zeta_{1}^{'}\rangle \big| = \big| \xi_1 \zeta_1   \big| \leq \big| \zeta_1
    \big|$, and the equality is achieved for $ \xi = (1,0,\dots,0)$, i.e.
    \begin{multline*}
      (1-\sqrt{1-r^2})(1-r^2)^{\alpha}\max_{ |\xi|=1} \int_{\mathbb{B}^n}
      \big| \langle \xi, \zeta_{1}^{'} \rangle \big| \frac{(1-
        |\zeta|^2)^{\alpha}}{\big|1 - r\zeta_1 \big|^{2\alpha}} dv(\zeta)= \\
        = (1-\sqrt{1-r^2})(1-r^2)^{\alpha} \int_{\mathbb{B}^n} \frac{|\zeta_1|
         (1 - |\zeta|^2)^{\alpha}}{|1- r\zeta_1|^{2\alpha}} \d v(\zeta).
       \end{multline*}
       \par In order to estimate the second integral,  we need the following lemma:
       \begin{lemma}
         \begin{multline*}
           \max_{ |\xi|=1}  \int_{\mathbb{B}^n}|\langle \xi, \zeta \rangle | \frac{(1- |\zeta|^2)^{\alpha}}{{|1-r\zeta_1|^{2\alpha}}} dv(\zeta)= \\
      =\max_{|\xi_1|^2+|\xi_2|^2=1 }  \int_{\mathbb{B}^n}|\zeta_1 | \frac{(1-
        |\zeta|^2)^{\alpha}}{{|1-r\zeta_1\xi_1-r\zeta_2\xi_2|^{2\alpha}}}
     \d v(\zeta).
      \end{multline*}
    \end{lemma}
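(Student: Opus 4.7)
The plan is to reduce the left-hand maximum to the right-hand one via a unitary change of variable that trades the freedom of $\xi \in \mathbb{C}^n$ for asymmetry in the weight $|1-r\zeta_1|^{-2\alpha}$. Given $\xi$ with $|\xi|=1$, I would construct a unitary $U$ on $\mathbb{C}^n$ satisfying $U\xi = e_1$ and $Ue_1 \in \mathrm{span}(e_1, e_2)$; preservation of the inner product $\langle \xi, e_1 \rangle = \xi_1$ then forces $Ue_1 = \overline{\xi_1}\,e_1 + \beta e_2$ for some $\beta \in \mathbb{C}$ with $|\beta|^2 = 1 - |\xi_1|^2$.

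The next step is the substitution $\eta = U\zeta$. Since $U$ is unitary, $\d v(\zeta) = \d v(\eta)$ and $|\zeta| = |\eta|$. The numerator transforms as
$$|\langle \xi, \zeta \rangle| = |\langle U\xi, U\zeta \rangle| = |\langle e_1, \eta \rangle| = |\eta_1|,$$
while for the weight I compute $\zeta_1 = \langle \zeta, e_1 \rangle = \langle \eta, Ue_1 \rangle = \xi_1 \eta_1 + \overline{\beta}\, \eta_2$. Consequently the left-hand integral becomes
$$\int_{\mathbb{B}^n} |\eta_1|\,\frac{(1-|\eta|^2)^\alpha}{|1-r\xi_1\eta_1-r\overline{\beta}\,\eta_2|^{2\alpha}}\,\d v(\eta),$$
which is exactly the right-hand integrand with $(\xi_1, \xi_2) := (\xi_1, \overline{\beta})$.

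It then remains to check that the parameter sets match. By construction $|\xi_1|^2 + |\overline{\beta}|^2 = 1$, so $(\xi_1, \overline{\beta})$ lies on the unit sphere of $\mathbb{C}^2$; conversely, for any $(a,b) \in \mathbb{C}^2$ with $|a|^2 + |b|^2 = 1$, one may take $\xi = (a, |b|, 0, \ldots, 0)$ and, exploiting the phase freedom of $\beta$, arrange $\overline{\beta} = b$. This gives the claimed equality of the two maxima.

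The main obstacle I anticipate is the simultaneous realization of $U\xi = e_1$ and $Ue_1 \in \mathrm{span}(e_1, e_2)$: I need to confirm that prescribing $U$ on the (at most) two-dimensional subspace spanned by $\xi$ and $e_1$ as a unitary map onto $\mathrm{span}(e_1, e_2)$ is consistent — equivalently, that $\langle e_1, \overline{\xi_1} e_1 + \beta e_2\rangle = \xi_1$ matches $\langle \xi, e_1 \rangle$. Once this compatibility is verified, extending to a full unitary on $\mathbb{C}^n$ is routine; the degenerate case $\xi = \lambda e_1$ with $|\lambda|=1$ (so $\beta = 0$) is handled directly by $U = \overline{\lambda}\cdot\mathrm{Id}$.
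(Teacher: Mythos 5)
Your proposal is correct and is essentially the paper's argument: the paper performs the same unitary change of variables in two stages (first a unitary fixing $e_1$ that moves $\xi$ into $\mathrm{span}(e_1,e_2)$, then one sending $\xi$ to $e_1$), whereas you compose them into a single unitary with $U\xi=e_1$ and $Ue_1\in\mathrm{span}(e_1,e_2)$, verifying the required Gram-matrix compatibility $\langle e_1,\overline{\xi_1}e_1+\beta e_2\rangle=\xi_1$. The mechanism — unitary invariance of $\d v$ and of $\mathbb{B}^n$, transferring the asymmetry from $|\langle\xi,\zeta\rangle|$ to the kernel — is identical, and your surjectivity check in both directions completes the equality of the two maxima.
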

    \begin{proof}
       Note that there exits a unitary change of variable such that
         $$ Ue_1=e_1, \quad U\xi=\xi', $$ where $\xi'$ satisfies conditions
         $$\langle \xi',e_1 \rangle=\langle \xi,e_1 \rangle \quad \text{and} \quad \xi'=\alpha e_1 + \beta e_2, $$ for some $\alpha, \beta \in \mathbb{C}$. 
         
       Then, we have
       \begin{align*}
     \int_{\mathbb{B}^n} \big| \langle \xi, \zeta \rangle  \big| \frac{(1 -|\zeta|^2)^{\alpha}}{\big| 1 - r\zeta_1  \big|^{2\alpha}}\d v(\zeta) &=  
      \int_{\mathbb{B}^n} \big| \langle U\xi, U\zeta \rangle  \big| \frac{(1 -|\zeta|^2)^{\alpha}}{\big| 1 - r\zeta_1  \big|^{2\alpha}}\d v(\zeta) \\ 
     &= \int_{\mathbb{B}^n} \big| \langle \xi', U\zeta \rangle  \big| \frac{(1
       -|\zeta|^2)^{\alpha}}{\big| 1 - r\zeta_1  \big|^{2\alpha}}\d v(\zeta).
     \end{align*}
     Introducing in the last integral a substitution $\zeta = U^*\eta $, we get
      $$ \int_{\mathbb{B}^n} \big| \langle \xi', U\zeta \rangle  \big| \frac{(1 -|\zeta|^2)^{ \alpha}}{\big| 1 - r \langle \zeta,e_1 \rangle   \big|^{2\alpha}}\d v(\zeta)=   
       \int_{\mathbb{B}^n} \big| \langle \xi', \eta \rangle  \big| \frac{(1 -|\eta|^2)^{\alpha}}{\big| 1 - r\eta_1  \big|^{2\alpha}}\d v(\eta), $$
      because 
      $$ \langle U^*\eta, e_1 \rangle = \langle \eta, Ue_1 \rangle = \langle
      \eta, e_1 \rangle= \eta_1. $$
      
     So, the maximum is already attained on vectors of the form $(\xi_1,\xi_2, 0,\dots,0).$ 
     For a given  $\xi=(\xi_1,\xi_2, 0,..,0) $ on the  unit sphere we can make substitution  $\zeta= A^*\eta $, where $A\xi=e_1$, $A$ is an appropriate unitary matrix. 
     We have
     \begin{align*}
     \int_{\mathbb{B}^n} \big| \langle \xi, \zeta \rangle  \big| \frac{(1 -|\zeta|^2)^{\alpha}}{\big| 1 - r\zeta_1  \big|^{2\alpha}}\d v(\zeta) &= 
      \int_{\mathbb{B}^n} \big| \langle A\xi, A\zeta \rangle  \big| \frac{(1 -|\zeta|^2)^{\alpha}}{\big| 1 - r\zeta_1  \big|^{2\alpha}}\d v(\zeta) \\
                                                                                                                                              &=\int_{\mathbb{B}^n} \big| \langle e_1, \eta \rangle  \big| \frac{(1 -|\eta|^2)^{\alpha}}{\big| 1 - r \langle A^*\eta, e_1 \rangle  \big|^{2\alpha}}\d v(\eta) \\
                                                                                                                                              &= 
       \int_{\mathbb{B}^n} |\eta_1| \frac{(1-|\eta|^2)^{\alpha}}{\big|1-r\eta_1 \chi_1 - r\eta_2 \chi_2 \big|^{2\alpha}}\d v(\eta),
       \end{align*}
      where $\big|\chi_1 \big|^2+\big|\chi_2 \big|^2=1,$ which proves lemma.
      \end{proof}
     
      Now, by Fubini's theorem:
      \begin{multline*}
      \int_{\mathbb{B}^n} |\zeta_1|
      \frac{(1-|\zeta|^2)^{\alpha}}{\big|1-r\zeta_1 \xi_1 - r\zeta_2 \xi_2
        \big|^{2\alpha}}\d v(\zeta)= \\
      = \int_{\mathbb{B}^{n-2}} \left(
      \int_{\sqrt{1-|\zeta'|^2}\mathbb{B}^2}|\zeta_1|
      \frac{(1-|\zeta'|^2-|\zeta_1|^2-|\zeta_2|^2)^{\alpha}}{\big|1-r\zeta_1
        \xi_1 - r\zeta_2 \xi_2 \big|^{2\alpha}}\d v(\zeta_1,\zeta_2)\right) \ d v(\zeta_3,\dots,\zeta_n),
      \end{multline*}
      where $\zeta=(\zeta_1,\zeta_2,\zeta').$
      
In the inner integral $I(\zeta',\xi_1, \xi_2,r)$ we use polar coordinates
\begin{gather*}
  \zeta_1=\rho_1e^{i\varphi_1}\sqrt{1-|\zeta'|^2}, \quad
  \zeta_2=\rho_2e^{i\varphi_2}\sqrt{1-|\zeta'|^2}, \\
  D = \left\{ (\varphi_1, \varphi_2, \rho_1, \rho_2)| \rho_1^2+\rho_2^2 <1, \,
    \rho_1,\rho_2>0, \, \varphi_1, \varphi_2 \in [0,2\pi] \right\},
\end{gather*}
and obtain
$$ I(\zeta',\xi_1, \xi_2,r)=
(1-|\zeta'|^2)^{\alpha+ \frac{5}{2}}\int_D \frac{(1-\rho_1^2-\rho_2^2)^\alpha
  \rho_1^2 \rho_2 \d \varphi_1 \d \varphi_2 \d \rho_1 \d \rho_2}{\big|
  1-\sqrt{1-|\zeta'|^2}(r\rho_1\xi_1e^{i\varphi_1}+r\rho_2\xi_2e^{i\varphi_2})
  \big|^{2\alpha}}.$$
      
Using a power series expansion
$$(1 - z)^{-\alpha} = 1 + \sum_{k=1}^{+\infty} \binom{\alpha+k-1}{k} z^{k}, \quad
|z|<1, \quad z \in \mathbb{C}$$ and Parseval's identity, we have:
\begin{multline*}
  \int_{0}^{2\pi}\int_{0}^{2\pi}\frac{\d \varphi_1\d \varphi_2}{\big|1-\sqrt{1-|\zeta'|^2}(r\rho_1\xi_1e^{i\varphi_1}+r\rho_2\xi_2e^{i\varphi_2})
    \big|^{2\alpha}} = \\
  =4\pi^2\sum_{k=0}^{+\infty}{\binom{\alpha+k-1}{k}}^2 r^{2k}(1-|\zeta'|^2)^k
  \sum_{j=0}^{k} \binom{k}{j}^2 (\rho_1|\xi_1|)^{2j}
  (\rho_2|\xi_2|)^{2k-2j}.
\end{multline*}
      
Next, we integrate over the set $\left\{(\rho_1,\rho_2)| \rho_1^2+\rho_2^2<1, \,
  \rho_1,\rho_2>0 \right\}$ and with $|\xi_1|=\cos\theta, \, |\xi_2|=\sin\theta
$, for some $\theta \in [0,\frac{\pi}{2}],$ we get:
\begin{multline*}
    I(\zeta',\xi_1, \xi_2,r) = 4\pi^2 (1-|\zeta'|^2)^{\alpha+ \frac{5}{2}}
    \Bigg( \sum_{k=0}^{+\infty}{\binom{\alpha+k-1}{k}}^2 r^{2k}(1-|\zeta'|^2)^k
    \cdot {} \\
    {} \cdot \bigg( \sum_{j=0}^{k} \binom{k}{j}^2
    \cos^{2j}\theta\sin^{2k-2j}\theta 
    \cdot \int\limits_{\substack{\rho_1^2+\rho_2^2<1\\\rho_1,\rho_2>0}}(1-\rho_1^2-\rho_2^2)^{\alpha}
    \rho_1^{2j+2} \rho_2^{2k-2j+1}\d \rho_1\d \rho_2 \bigg) \Bigg).
\end{multline*}
    
Change of variables $ \rho_1=\sqrt{s}, \, \rho_2=\sqrt{t}$ combined with
Fubini's theorem, and then a new substitution $t=u(1-s)$ gives us
\begin{align*}
  \int\limits_{\substack{\rho_1^2+\rho_2^2<1\\\rho_1,\rho_2>0}} (1-\rho_1^2-&\rho_2^2)^{\alpha}
  \rho_1^{2j+2} \rho_2^{2k-2j+1} \d \rho_1\d \rho_2 = \\
  &= \frac{1}{4}\int\limits_{\substack{s+t<1\\s,t>0}}(1-s-t)^{\alpha} s^{j+\frac{1}{2}} t^{k-j}\d s\d t \\
  &=\frac{1}{4}\int_{0}^{1}\left(\int_{0}^{1-s} (1-s-t)^{\alpha}  t^{k-j}\d t
    \right)s^{j+\frac{1}{2}} \d s\d t  \displaybreak[0]\\ 
                                                                            &=\frac{1}{4}\int_{0}^{1}\left(\int_{0}^{1} (1-s)^{\alpha+k-j+1}(1-u)^\alpha u^{k-j}\d u \right)s^{j+\frac{1}{2}} dsdt \displaybreak[0]\\
                                                                            &=\frac{1}{4} \mathrm{B}(\alpha+1,k-j+1)\mathrm{B}(\alpha+k-j+2,j+\frac{3}{2}) \displaybreak[0]\\
  &=\frac{1}{4} \frac{\Gamma(\alpha+1)\Gamma(k-j+1)}{\Gamma(\alpha+k-j+2)} \frac{\Gamma(\alpha+k-j+2)\Gamma(j+\frac{3}{2})}{\Gamma(\alpha+k+\frac{7}{2})} \\
  &=\frac{1}{4} \frac{\Gamma(\alpha+1)}{\Gamma(\alpha+k+\frac{7}{2})}
  \Gamma(k-j+1)\Gamma(j+\frac{3}{2}),
\end{align*}
so,
\begin{align*}
  I(\zeta',\xi_1, \xi_2,r) &= \pi^2 \Gamma(\alpha+1) \Bigg( \sum_{k=0}^{+\infty}\frac{{\binom{\alpha+k-1}{k}}^2}{\Gamma(\alpha+k+\frac{7}{2})} r^{2k}(1-|\zeta'|^2)^{k+\alpha+ \frac{5}{2}} \cdot {} \\
                               &\hspace{2em}{} \cdot \sum_{j=0}^{k} \binom{k}{j}^2 \Gamma(k-j+1)\Gamma(j+\frac{3}{2})\cos^{2j}\theta\sin^{2k-2j}\theta \Bigg).
\end{align*}
       
       Let us prove that
       $$a_{j,k}=\binom{k}{j} \Gamma(k-j+1) \Gamma\left(j + \frac{3}{2}\right),$$ is, for a  fixed $k,$ increasing in $j,  0\leq j \leq k.$ Indeed
       This is equal, respectively to:
       \begin{multline*}
        a_{j,k}=\frac{k!}{j! (k-j)!} (k-j)! \Gamma \left(j + \frac{3}{2} \right) = \frac{k!}{j!} \Gamma\left( j + \frac{3}{2} \right) = \\ 
        = \frac{k!}{\Gamma(j+1)} \Gamma \left( j + \frac{3}{2} \right) = k! \Gamma \left( \frac{1}{2} \right) \mathrm{B} \left( j+1, \frac{1}{2} \right)^{-1},
       \end{multline*}
      but
      $$\mathrm{B}\left(j+1, \frac{1}{2}\right) = \int_0^1 t^{j+2} (1-t)^{\frac{3}{2}} \, dt$$
      decreases in $j$, so $a_{j,k}$ increases.
      
      This implies
      \begin{multline*}
      \sum_{j=0}^k \binom{k}{j}^2 \Gamma(k-j+1) \Gamma \left(j + \frac{3}{2}\right) \cos^{2j} \theta \sin^{2k - 2j} \theta \leq \\
      \leq \sum_{j=0}^k \binom{k}{j} \Gamma \left(k + \frac{3}{2}\right) \cos^{2j} \theta \sin^{2k - 2j} \theta = \\
      = \Gamma \left( k + \frac{3}{2}\right) \left( \cos^2 \theta + \sin^2 \theta \right)^k
      = \Gamma \left( k + \frac{3}{2}\right),
     \end{multline*}
     and we have the equality for $\theta = 0$, or $\xi_1 = 1, \, \xi_2 = 0$.
     \par Note that this means that in the second integral in $(8)$ the supremum is also attained 
     at $\xi=e_1,$ which makes our estimates sharp. 
     
From the above calculations we can deduce that
\begin{multline*}   
  T(r)(1-r^2)^{-\alpha}=\int_{\mathbb{B}^n} |\zeta_1| \frac{\left( 1 - |\zeta|^2\right)^\alpha
  }{\left| 1 - r\zeta_1 \right|^{2 \alpha}} \d v(\zeta) = \\
  = \pi^2 \sum_{k=0}^{+\infty} \binom{\alpha + k - 1}{k}^2 \frac{\Gamma(\alpha
    +1) \Gamma \left( k + \frac{3}{2} \right)}{\Gamma \left( k + \alpha +
      \frac{7}{2} \right)} r^{2k} \int_{\mathbb{B}^{n-2}} \left(1 - |\zeta'|^2
  \right)^{k + \alpha + \frac{5}{2}} \d v(\zeta').
\end{multline*}

Next, we have:
\begin{multline*}
  \int_{\mathbb{B}^{n-2}} (1 - |\zeta'|^2)^{k + \frac{5}{2} + \alpha} \d
  v(\zeta') = \frac{2\pi^{n-2}}{\Gamma(n-2)} \int_{0}^{1} (1 - r^2)^{k+
    \frac{5}{2}+ \alpha}
  r^{2n-5} \d r = \\
  = \frac{\pi^{n-2}}{\Gamma(n-2)} B(k+\alpha + \frac{7}{2}, n-2) =\pi^{n-2}
  \frac{\Gamma(k+\alpha + \frac{7}{2})}{\Gamma(k+\alpha+n+\frac{3}{2})}.
\end{multline*}

Therefore:
\begin{multline*}
  (n+1) c_\alpha \int_{\mathbb{B}^n} |\zeta_1| \frac{\left( 1 - |\zeta|^2\right)^\alpha
  }{\left| 1 - r\zeta_1 \right|^{2 \alpha}} \d v(\zeta) = \\
  = (n+1)
  \sum_{k=0}^{+\infty} \binom{\alpha + k - 1}{k}^2 \frac{\Gamma(n+\alpha + 1)
    \Gamma{\left( k + \frac{3}{2} \right)}}{\Gamma{\left( k + \alpha +n
        +\frac{3}{2} \right)}} r^{2k}.
\end{multline*}
  (Here we use the values of $c_\alpha$ and the last integral.)
  Observe that the above calculations are valid for $n \geq 2$. 
  The case $n=1$ is much easier and we leave details to the reader.

  \par Now, we prove the following lemma:
  
  \begin{lemma}
    The sequence
$$a_{k} = \binom{\alpha + k-1}{k} \frac{\Gamma(k+\frac{3}{2})}{\Gamma(k+\alpha +
  n+ \frac{3}{2})}, \quad k \in \mathbb{N}_{0} $$
is monotone decreasing for $\alpha \leq 2n +3 $, and for $\alpha > 2n+3$ it
increases for $k\leq k_{\alpha}$ and decreases for $k > k_{\alpha}$.
\end{lemma}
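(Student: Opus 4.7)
The natural approach is to study the ratio of consecutive terms $a_{k+1}/a_k$ and determine the sign of $a_{k+1}-a_k$ from that. Using the identity $\binom{\alpha+k}{k+1}/\binom{\alpha+k-1}{k}=(\alpha+k)/(k+1)$ and the standard telescoping for the Gamma function ($\Gamma(k+\frac{5}{2})/\Gamma(k+\frac{3}{2})=k+\frac{3}{2}$ and $\Gamma(k+\alpha+n+\frac{3}{2})/\Gamma(k+\alpha+n+\frac{5}{2})=1/(k+\alpha+n+\frac{3}{2})$), the first step will give
$$\frac{a_{k+1}}{a_k}=\frac{(\alpha+k)(k+\frac{3}{2})}{(k+1)(k+\alpha+n+\frac{3}{2})}.$$

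The next step is to reduce the inequality $a_{k+1}\le a_k$ to a linear condition in $k$. Cross-multiplying (all factors are positive for $\alpha\ge 0$ and $k\ge 0$) and expanding both sides, the quadratic terms cancel and one is left with
$$(\alpha+k)\bigl(k+\tfrac{3}{2}\bigr)-(k+1)\bigl(k+\alpha+n+\tfrac{3}{2}\bigr)=-(n+1)k+\tfrac{\alpha-2n-3}{2}.$$
Therefore $a_{k+1}\le a_k$ is equivalent to $k\ge(\alpha-2n-3)/(2n+2)$, with strict inequality corresponding to strict monotonicity.

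From this the two cases in the lemma follow immediately. If $\alpha\le 2n+3$ then $(\alpha-2n-3)/(2n+2)\le 0$, so the inequality $k\ge(\alpha-2n-3)/(2n+2)$ holds for every $k\in\mathbb{N}_0$; hence $a_k$ is monotone decreasing. If $\alpha>2n+3$ then the threshold $(\alpha-2n-3)/(2n+2)$ is strictly positive, and by definition $k_\alpha$ is the smallest non-negative integer at least equal to it. Thus for $k\le k_\alpha-1$ one has $k<(\alpha-2n-3)/(2n+2)$, giving $a_{k+1}>a_k$, while for $k\ge k_\alpha$ one has $k\ge(\alpha-2n-3)/(2n+2)$, giving $a_{k+1}\le a_k$. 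This is precisely the claim that $a_k$ increases on $\{0,1,\dots,k_\alpha\}$ and decreases on $\{k_\alpha,k_\alpha+1,\dots\}$.

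There is no real obstacle here beyond bookkeeping: the only subtlety is handling the edge case when $(\alpha-2n-3)/(2n+2)$ is itself an integer, in which case $a_{k_\alpha+1}=a_{k_\alpha}$ and monotonicity past $k_\alpha$ is weak; this does not affect the statement since the lemma is phrased in terms of the location of the maximum.
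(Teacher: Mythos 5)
Your proposal is correct and follows essentially the same route as the paper: both compute the ratio of consecutive terms, observe that the quadratic terms cancel so that the comparison reduces to the linear condition $2k(n+1)+2n+3 \geq \alpha$, and then read off the two cases via the definition of $k_\alpha$ as a ceiling. Your remark on the edge case where the threshold is an integer is a minor refinement the paper glosses over, but the argument is the same.
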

\begin{proof}
  For $k \geq 0$ we have
  \[
    \frac{a_k}{a_{k+1}} = \frac{\binom{\alpha + k -1}{k} \Gamma(k+\frac{3}{2})
      \Gamma(k+\alpha + n+ \frac{5}{2})}{\binom{\alpha+k}{k+1} \Gamma(k+ \alpha
      + n + \frac{3}{2}) \Gamma(k+ \frac{5}{2})} =\frac{(k+1)(k+\alpha + n +
      \frac{3}{2})}{(\alpha+k)(k+\frac{3}{2})}.
  \]

  Now, it is easy to see that $\frac{a_k}{a_{k+1}}>1$ iff
  $2k(n+1) + 2n+ 3 \geq \alpha.$
  
This holds for all $k\geq 0$ iff $\alpha \leq 2n +3$. If $\alpha > 2n+3$, then
for $k = 0$, inequality does not hold, but $2k(n+1)+2n+3$ increases as a
function of $k$, so for some $k=k_{\alpha}$ it will be not smaller than
$\alpha$.

Thus, in case $\alpha> 2n+3$, for $k<k_{\alpha}$ we have $a_k < a_{k+1}$, and
$a_k \geq a_{k+1}$, when $k\geq k_{\alpha}$. Here
$$k_{\alpha} = \left\lceil \frac{\alpha - (2n+3)}{2n+2} \right\rceil. $$

We can also conclude that for $\alpha> 2n+3$, the greatest term in this sequence is 
$$a_{k_{\alpha}} = \binom{\alpha+ k_{\alpha} -
  1}{k_{\alpha}}\frac{\Gamma(k_{\alpha} + \frac{3}{2})}{\Gamma(k_{\alpha} +
  \alpha + n + \frac{3}{2})}. \qedhere$$
\end{proof}

With all these computations and lemmas , we can complete the proof of our Theorem.

Namely, if $\alpha \leq 2n+3$ then 
$$\binom{\alpha + k - 1}{k} \frac{\Gamma(k+ \frac{3}{2})}{\Gamma(k+ \alpha + n +
  \frac{3}{2})} \leq \frac{\Gamma(\frac{3}{2})}{\Gamma(k+ \alpha + n +
  \frac{3}{2})}, \quad  k\geq 0$$
and thus:
\begin{multline*}
(1-r^2)^{\alpha} \sum_{k=0}^{+\infty} 
  (n+1){\binom{\alpha+k-1}{k}}^2\frac{\Gamma(n+
  \alpha+1)\Gamma(k+\frac{3}{2})}{\Gamma(k+\alpha+n+\frac{3}{2})}r^{2k}  \leq \\
\leq(n+1) (1-r^2)^{\alpha}
\sum_{k=0}^{+\infty}{\frac{\Gamma(n+\alpha+1)\Gamma(\frac{3}{2})}{\Gamma(\alpha+
    n + \frac{3}{2})}}\binom{\alpha+k-1}{k} r^{2k} = \\
= (n+1) \frac{\Gamma(n+ \alpha+1)\Gamma(\frac{3}{2})}{\Gamma(\alpha + n + \frac{3}{2})}(1 - r^2)^{\alpha}(1-r^2)^{-\alpha} 
= (n+1) \frac{\Gamma(n+ \alpha+1)\Gamma(\frac{3}{2})}{\Gamma(\alpha + n + \frac{3}{2})}
\end{multline*}
or 
$$(1 - r^2)^{\alpha} \sum_{k=0}^{+\infty} \frac{\Gamma(k+ \frac{3}{2})}{\Gamma(k+ n+\alpha+ \frac{3}{2})}{\binom{\alpha+k-1}{k}}^2 r^{2k}
\leq \frac{\Gamma(\frac{3}{2})}{\Gamma(n+ \alpha+ \frac{3}{2})}. $$

Both inequalities become equalities for $r=0.$

This proves
$$S(z) \leq (n+1) \frac{\Gamma(n+ \alpha + 1)
  \Gamma(\frac{3}{2})}{\Gamma(\alpha + n + \frac{3}{2})}, \quad
\|f\|_\infty \leq 1.$$
Now, taking $z=0, f(w)= \dfrac{|w_1|}{\overline{w_1}} \in L^{\infty},$ for $w =
(w_1, w_2,\dots, w_n)$, we get
\begin{multline*}
  (1- |z|^2)|\nabla_z (B_{\alpha}f)(z)| = |\nabla_z (B_{\alpha}f)(0)|= \\
= \sup_{ |\xi|=1} (n+1)c_{\alpha} \left|\int_{\mathbb{B}^n}
\langle\overline{w} \frac{|w_1|}{\overline{w_1}}, \xi \rangle
(1-|w|^2)^{\alpha}\d (w)\right| \geq \\
 \geq (n+1)c_{\alpha} \int_{\mathbb{B}^n} |w_1|(1-|w|^2)^{\alpha} \d v(w)=
(n+1)\frac{\Gamma(n+ \alpha +1)
  \Gamma(\frac{3}{2})}{\Gamma(\alpha+n+\frac{3}{2})} ,
\end{multline*}
where a choice
$\xi=e_1$ justifies the inequality.
\par Thus, we have
$$\sup_{|z|<1}(1 - |z|^2)|\nabla_{z} (B_{\alpha}f)(z)| = (n+1)\frac{\Gamma(n+ \alpha+1)\Gamma(\frac{3}{2})}{\Gamma(\alpha+n+\frac{3}{2})} = \frac{n+1}{2}B(n+ \alpha+1,\frac{1}{2}).$$

For $\alpha> 2n+3$ we have boundedness but this formula for the norm does not hold. Namely, if we denote
$$c_k = \binom{\alpha+k-1}{k} \frac{\Gamma(k+\frac{3}{2})}{\Gamma(k+ \alpha + n
  + \frac{3}{2})} \quad \text{and} \quad
 d_k = \binom{\alpha+k-1}{k} \frac{\Gamma(\frac{3}{2})}{\Gamma(\alpha+n+\frac{3}{2})},$$
it is easy to see that $c_0 = d_0$, and $c_1>d_1$. 
Then
\begin{multline*}
  (n+1) \Gamma(n+\alpha + 1)(1-r^2)^{\alpha} \sum_{k=0}^{+\infty}
  \binom{\alpha + k-1}{k} \frac{\Gamma(k+\frac{3}{2})}{\Gamma(k+ \alpha+ n +
    \frac{3}{2})}r^{2k} \leq \\
  \leq (n+1) \Gamma(n+ \alpha+1)\frac{\Gamma(\frac{3}{2})}{\Gamma(\alpha
    + n + \frac{3}{2})}
\end{multline*}
is equivalent to
$$\sum_{k=1}^{+\infty} c_k r^{2k} \leq \sum_{k=1}^{+\infty} d_k r^{2k}.$$(observe $c_0 = d_0$).
But,  
$$ \lim_{r\to 0+} \frac{\sum_{k=1}^{+\infty} c_k r^{2k}}{\sum_{k=1}^{+\infty} d_k r^{2k}}=\frac{c_1}{d_1}>1,$$
so the above inequality can not hold for small values of $r$.

We cannot expect, also, that it will be achieved for $r\to 1-$, because
\begin{align*}
  T(r) &= (1-r^2)^{\alpha}\int_{\mathbb{B}^n} |\zeta_1|\frac{(1-|\zeta|^2)^{\alpha}}{|1 - r\zeta_1|^{2\alpha}} \d v(\zeta) \\
          &\leq \int_{\mathbb{B}^n}|\zeta_1|{ \bigg(\frac{(1-r^2)(1 - |\zeta_1|^2)}{|1-r\zeta_1|^2}\bigg) }^{\alpha}\d v(\zeta) \\
  &= \int_{\mathbb{B}^n} |\zeta_1|(1- |\phi_r (\zeta_1)|^2)^{\alpha} \d v(\zeta),
  \end{align*}
but
$$|\zeta_1| (1 - |\phi_r(\zeta_1)|^2)^{\alpha} \leq |\zeta_1| \in L^{1}(\mathbb{B}^n, \d v(\zeta)) $$
and, by Lebesgue's dominated convergence theorem
$$0 \leq \limsup_{r\to 1-} T(r) \leq \int_{\mathbb{B}^n} \lim_{r\to 1-} |\zeta_1|(1 - |\phi_r (\zeta_1)|^2)^{\alpha} \d v(\zeta) = 0$$
as $ \phi_r(\zeta_1) = \frac{r - \zeta_1}{1- r\zeta_1} \to 1$ as $r \to 1-$.
So, the maximum is not attained at $r=0$ or $r=1$. 
\par In this case, we can estimate norm using observation in Lemma 2. Namely,
$$(n+1)c_\alpha T(r)< (n+1)\Gamma(n+ \alpha +1) \binom{\alpha+ k_{\alpha} - 1}{k_{\alpha}} \frac{\Gamma(k_{\alpha} + \frac{3}{2})}{\Gamma (k_{\alpha} + \alpha + n + \frac{3}{2})}. $$
\par The inequality is strict, because, for $r=0$ we already proved it, while for $r>0$:
$a_k < a_{k_{\alpha}}$ is strict for $k\neq k_{\alpha}$.

\par If $\alpha<0 $, the same choice of $f$ as above, brings us, again, to a
real function $T(r), \, 0 \leq r<1 $, but now this function is the product of
the two monotone increasing functions: the first one is $(n+1)c_\alpha(1-r^2)^\alpha$ and the
second one is $ \int_{\mathbb{B}^n} |\zeta_1|\frac{(1-|\zeta|^2)^{\alpha}}{|1 -
  r\zeta_1|^{2\alpha}} \d v(\zeta) $, which is, by the earlier computations also
increasing in $r$. Now, because the first of these is unbounded, we can conclude
that $B_\alpha$ is not bounded for negative $\alpha$.  

  It is straightforward to verify that the analogous sharp estimate holds for  $|\nabla_{\overline{z}} (B_\alpha f)(z)|$ with extremal function $f(w)= \frac{|w_1|}{w_1} \in L^{\infty}.$ 
  
  \section{Proof of the Theorem 2}
  
   During this section we will use the same notation for complex and real scalar product, but it will be clear from the context which one is used. Let us note that the above estimates do not give the sharp constant in the appropriate inequality for real gradient. Instead, we use the inequality:
 \begin{multline}
 |\nabla (B_{\alpha}f)(z)| = \sup_{ |l|=1}
|\langle \nabla (B_{\alpha}f)(z), l \rangle| \\ 
= \sup_{|l|=1}
\left| \int_{\mathbb{B}^n} \langle \nabla K (z,w)f(w), l  \rangle \d v_{\alpha}(w) \right| \\
 \leq \sup_{ |l|=1} \int_{\mathbb{B}^n}
|\langle \nabla K(z,w), l \rangle| \, |f(w)|
\d v_{\alpha} (w) \\
\leq \sup_{ |l|=1} \int_{\mathbb{B}^n}
|\langle \nabla K(z,w), l \rangle| \, 
\d v_{\alpha} (w)  \|f\|_{\infty}
\end{multline}
Here $l \in \mathbb{R}^{2n}.$ 

Also, we need certain connections with previous calculations to find the supremum from the last expression. Namely, since
$$ \frac{\partial}{\partial x_k}K(z,w)= \frac{\partial}{\partial z_k}K(z,w)+\frac{\partial}{\partial {\overline z}_k}K(z,w) $$
and
$$ \frac{\partial}{\partial y_k}K(z,w)= i \bigg(\frac{\partial}{\partial z_k}K(z,w)-\frac{\partial}{\partial {\overline z}_k}K(z,w) \bigg), $$
where $z_k=x_k+i y_k, k=1,2 \dots, n,$ we have:
\begin{align*}
	\langle \nabla K(z,w), l \rangle &= \sum_{k=0}^{n}  \frac{\partial}{\partial x_k}K(z,w)l_{2k-1} + \frac{\partial}{\partial y_k}K(z,w)l_{2k}\\
&= \sum_{k=0}^{n} \frac{\partial}{\partial z_k}K(z,w) (l_{2k-1}+i l_{2k})
+ \frac{\partial}{\partial {\overline z}_k}K(z,w) (l_{2k-1}-i l_{2k})=\\
&2 Re \sum_{k=0}^{n} \frac{\partial}{\partial z_k}K(z,w) \overline{\xi_k}= 2 Re \langle \nabla_{z}K(z,w), \xi \rangle, 
\end{align*}
for $\xi_k=l_{2k-1}-i l_{2k},$ because kernel $K(z,w)$ is real valued.

Let us assume that $\|f\|_{\infty}=1.$
Hence, by the earlier computations, we obtained:

$$ (1- |z|^2)|\nabla (B_{\alpha}f) (z)| \leq 2(n+1)c_{\alpha} \sup_{ |\xi|=1 }\int_{\mathbb{B}^n}
 \frac{(1-|z|^2)^{n+1}}{|1- \langle z, w \rangle|^{2n+2}} \cdot {} $$\\ 
$$ {} \cdot \big|Re  \frac {(1- |z|^2)\langle \xi , w \rangle - (1 - \langle z , w
 \rangle)\langle \xi, z \rangle}{1- \langle z, w \rangle}  \big| \d v_{\alpha} (w)= $$\\ 
$$  2(n+1) c_\alpha \sup_{|\xi|=1} \int_{\mathbb{B}^n}
\big| Re  \langle \xi , \frac{\langle \zeta, z\rangle}{|z|^2}z + \sqrt{1 - |z|^2} (\zeta - \frac{\langle \zeta, z \rangle}{|z|^2}z)\rangle \big| \frac{(1- |z|^2)^{\alpha}(1- |\zeta|^2)^{\alpha}}{|1 - \langle z. \zeta  \rangle|^{2\alpha} } \d v(\zeta)  $$

Again, assuming $z=(r,0 \dots 0), 0 \leq r < 1$  we get:

\begin{multline*}
	(1-|z|^2)|\nabla (B_{\alpha}f)(z)| \leq 2(n+1) c_{\alpha}(1-r^2)^{\alpha} \\
\times 	\max_{|\xi|=1}  \int_{\mathbb{B}^n} \left| Re  \langle \xi, \sqrt{1-r^2}\zeta + (1-\sqrt{1-r^2})\zeta_1^{'} \rangle \right| \frac{(1- |\zeta|^2)^{\alpha}}{{|1-r\zeta_1|^{2\alpha}}} \d v(\zeta).
\end{multline*}
Recall that $\zeta_1'=\zeta_1e_1$ for $\zeta=(\zeta_1, \dots, \zeta_n).$

Triangle inequality, as in (7), gives us:
\begin{align}
	(1-|z|^2)\left|\nabla (B_{\alpha}f)(z)\right| &\leq 2(n+1) c_{\alpha} \bigg[ (1-\sqrt{1-r^2})\nonumber\\
	 &\quad\times (1-r^2)^{\alpha} \max_{|\xi|=1} \int_{\mathbb{B}^n } \left|Re \langle \xi, \zeta_1{'}  \rangle \right| \frac{(1- |\zeta|^2)^{\alpha}}{{|1-r\zeta_1|^{2\alpha}}} \d v(\zeta) \label{nejednakost}\\
 &\quad+ (1-r^2)^{\alpha+\frac{1}{2}} \max_{|\xi|=1} \int_{\mathbb{B}^n } \left|Re \langle \xi, \zeta  \rangle \right| \frac{(1- |\zeta|^2)^{\alpha}}{{|1-r\zeta_1|^{2\alpha}}} \d v(\zeta) \bigg].\nonumber
\end{align}
 
The first maximum is attained for $\xi=e_1$, while for the second we again use the similar argument from Lemma 1 to get:

\begin{multline*}
	\max_{|\xi|=1}  \int_{\mathbb{B}^n}| Re \langle \xi, \zeta \rangle | \frac{(1- |\zeta|^2)^{\alpha}}{{|1-r\zeta_1|^{2\alpha}}} dv(\zeta)= \\
	=\max_{|\xi_1|^2+|\xi_2|^2=1 }  \int_{\mathbb{B}^n}| Re  \zeta_1 | \frac{(1-
		|\zeta|^2)^{\alpha}}{{|1-r\zeta_1\xi_1-r\zeta_2\xi_2|^{2\alpha}}}
	\d v(\zeta).
\end{multline*}
  
Now, we estimate

$$ \int_{\mathbb{B}^n} | Re \zeta_{1} | \frac{(1-|\zeta|^2)^{\alpha}}{|1-r\zeta_{1}\xi_1-r\zeta_{2}\xi_2 |^{2\alpha}} \d v(\zeta).$$

Denote  $\zeta_j=\mu_{2j-1}+i\mu_{2j}$, $\xi_j=\nu_{2j-1}+i\nu_{2j}$. Then

$$ \zeta_1\xi_1 + \zeta_2\xi_2=\mu_1\nu_1-\mu_2\nu_2+ \mu_3\nu_3-\mu_4\nu_4+i(\mu_1\nu_2+\mu_2\nu_1+\mu_3\nu_4+\mu_4\nu_3) $$

and so:
$$ \int_{\mathbb{B}^n} | Re \zeta_{1} | \frac{(1-|\zeta|^2)^{\alpha}}{|1-r\zeta_{1}\xi_1-r\zeta_{2}\xi_2 |^{2\alpha}} \d v(\zeta) \leq$$
$$ \int_{\mathbb{B}_{2n}} | \mu_{1} | \frac{(1-|\mu|^2)^{\alpha}}{|1-r(\mu_{1}\nu_1-\mu_{2}\nu_2+\mu_{3}\nu_3-\mu_{4}\nu_4) |^{2\alpha}} \d v(\mu).$$
Here $\mu=(\mu_1,\mu_2,\dots,\mu_{2n})$ and $\d v(\mu)$ is $2n-$dimensional Lebesgue measure.
We have used the inequality $|z| \geq |Re z|.$

In the last integral we can introduce variables which rotate $(-\nu_2,\nu_3,-\nu_4)$ to $\sqrt{\nu_2^2+\nu_3^2+\nu_4^2}(1,0,0)$ and hence we have:
$$ \int_{\mathbb{B}^n} | Re \zeta_{1} | \frac{(1-|\zeta|^2)^{\alpha}}{|1-r\zeta_{1}\xi_1-r\zeta_{2}\xi_2 |^{2\alpha}} \d v(\zeta) \leq$$
$$ \int_{\mathbb{B}_{2n}} | \mu_{1} | \frac{(1-|\mu|^2)^{\alpha}}{|1-r\mu_{1}\nu_1-r\mu_{2}\nu_2 |^{2\alpha}} \d v(\mu).$$
where $\nu_1^2+\nu_2^2 = 1, \nu_1,\nu_2 \in \mathbb{R}.$

Using series expansion for $(1-r\mu_{1}\nu_1-r\mu_{2}\nu_2)^{-2\alpha}$ we obtain:

$$  \int_{\mathbb{B}_{2n}} | \mu_{1} | \frac{(1-|\mu|^2)^{\alpha}}{\big(1-r\mu_{1}\nu_1-r\mu_{2}\nu_2 \big)^{2\alpha}} \d v(\mu)=$$
$$  \int_{\mathbb{B}_{2n}} | \mu_{1} | (1-|\mu|^2)^{\alpha} \sum_{k=0}^{+\infty} \binom{k+2\alpha-1}{k} r^k \sum_{j=0}^{k} \binom{k}{j} \mu_1^{k-j}\nu_1^{k-j} \mu_2^{j}\nu_2^j \d v(\mu)  = $$
$$ \sum_{k=0}^{+\infty} \binom{k+2\alpha-1}{k} r^k \sum_{j=0}^{k} \binom{k}{j} \nu_1^{k-j}\nu_2^j \int_{\mathbb{B}_{2n}} | \mu_{1} | (1-|\mu|^2)^{\alpha}  \mu_1^{k-j}\mu_2^j \d v(\mu). $$

It is obvious that for $j$ or $k-j$ odd, the integral over $\mathbb{B}_{2n}$ is zero, so the last sum is equal to:

$$ \sum_{k=0}^{+\infty} \binom{2k+2\alpha-1}{2k} r^{2k} \sum_{j=0}^{k} \binom{2k}{2j} \nu_1^{2k-2j}\nu_2^{2j} \int_{\mathbb{B}_{2n}} | \mu_{1} | (1-|\mu|^2)^{\alpha}  \mu_1^{2k-2j}\mu_2^{2j} \d v(\mu). $$
By Fubini's theorem:

$$\int_{\mathbb{B}_{2n}} | \mu_{1} |^{2k-2j+1}\mu_2^{2j} (1-|\mu|^2)^{\alpha}   \d v(\mu)= $$
$$\int_{\mathbb{B}_{2}} | \mu_{1} |^{2k-2j+1}\mu_2^{2j} \int_{\sqrt{1-\mu_1^2-\mu_2^2}\mathbb{B}_{2n-2}} (1-\mu_1^2-\mu_2^2-\mu'^2)^{\alpha}   \d v(\mu') \d v(\mu_1,\mu_2)= $$
$$\int_{\mathbb{B}_{2n-2}} (1-\tau^2)^{\alpha} \d v(\tau) \int_{\mathbb{B}_{2}} | \mu_{1} |^{2k-2j+1}\mu_2^{2j}(1-\mu_1^2-\mu_2^2)^{\alpha+n-1} \d \mu_1 \d \mu_2 =$$
$$\frac{\pi^{n-1}\Gamma(\alpha+1)}{\Gamma(\alpha+n)} \int_{u+v \leq 1, u,v\geq 0} u^{k-j} v^{j-\frac{1}{2}} (1-u-v)^{\alpha+n-1} \d u \d v=$$
$$\frac{\pi^{n-1}\Gamma(\alpha+1)}{\Gamma(\alpha+n)} B(k-j+1,\alpha+n) B(\alpha+n+k-j+1,j+\frac{1}{2})=$$
$$\frac{\pi^{n-1}\Gamma(\alpha+1)}{\Gamma(\alpha+n)}\frac{\Gamma(\alpha+n)}{\Gamma(\alpha+n+k+\frac{3}{2})}\Gamma(k-j+1)\Gamma(j+\frac{1}{2})=$$
$$\frac{\pi^{n-1}\Gamma(\alpha+1)}{\Gamma(\alpha+n+k+\frac{3}{2})}\Gamma(k-j+1)\Gamma(j+\frac{1}{2})=$$

Here we have used change of variables and then calculations of some integrals and very familiar relation between Gamma and Beta functions.

Now, we estimate the double sum:
$$\sum_{k=0}^{+\infty} \binom{2k+2\alpha-1}{2k} \frac{1}{\Gamma(\alpha+n+k+\frac{3}{2})} r^{2k} \sum_{j=0}^{k} \binom{2k}{2j} \Gamma(k-j+1)\Gamma(j+\frac{1}{2}) \nu_1^{2k-2j}\nu_2^{2j}.$$
 
 Let us note that sequence $b_{j,k}=\frac{\binom{2k}{2j}}{\binom{k}{j}}\Gamma(k-j+1)\Gamma(j+\frac{1}{2})$ is decreasing in $j$ for fixed $k$.
 Indeed, from
 $$b_{j,k}=\frac{\binom{2k}{2j}}{\binom{k}{j}}\Gamma(k-j+1)\Gamma(j+\frac{1}{2})=\frac{(2k)!j!(k-j)!^2}{(2j)!(2k-2j)!k!}\Gamma	(j+\frac{1}{2})$$
 we get
$$ \frac{b_{j+1,k}}{b_{j,k}}=\frac{(j+1)!(k-j-1)!^2\Gamma(j+\frac{3}{2})(2j)!(2k-2j)!}{(2j+2)!(2k-2j-2)!j!(k-j)!^2\Gamma(j+\frac{1}{2})}=\frac{2k-2j-1}{2k-2j} <1$$	
for $0 \leq j \leq k-1.$

Hence we obtain the estimate:

$$ \sum_{k=0}^{+\infty} \binom{2k+2\alpha-1}{2k} \frac{1}{\Gamma(\alpha+n+k+\frac{3}{2})} r^{2k} \sum_{j=0}^{k} \binom{2k}{2j} \Gamma(k-j+1)\Gamma(j+\frac{1}{2}) \nu_1^{2k-2j}\nu_2^{2j} \leq $$
$$ \sum_{k=0}^{+\infty} \binom{2k+2\alpha-1}{2k} \frac{1}{\Gamma(\alpha+n+k+\frac{3}{2})} r^{2k} \sum_{j=0}^{k} \binom{k}{j} k!\Gamma(\frac{1}{2}) \nu_1^{2k-2j}\nu_2^{2j} = $$
$$ \sum_{k=0}^{+\infty} \binom{2k+2\alpha-1}{2k} \frac{1}{\Gamma(\alpha+n+k+\frac{3}{2})} k!\Gamma(\frac{1}{2}) r^{2k} \big(\nu_1^2+\nu_2^2\big)^k =$$
$$\sum_{k=0}^{+\infty} \binom{2k+2\alpha-1}{2k} \frac{1}{\Gamma(\alpha+n+k+\frac{3}{2})} k!\Gamma(\frac{1}{2}) r^{2k}$$

It remains, now, to find the supremum of this function on $r \in [0,1].$ From the inequality \eqref{nejednakost} and estimates of the double sum, it is clear that this estimate would be sharp only if the supremum is achieved for $r=0.$  We will find the relation between $n$ and $\alpha$ for which it holds, and give an estimate for the other case. We need the following:

 \begin{lemma}
	The sequence
	$$b_{k} = \binom{2\alpha +2 k-1}{2k}\binom{\alpha + k-1}{k}^{-1}\frac{k!}{\Gamma(k+\alpha +
		n+ \frac{3}{2})}, \quad k \in \mathbb{N}_{0} $$
	is monotone decreasing in  $k$ iff $\alpha \leq n+\frac{1}{2}$, and for $\alpha>n+\frac{1}{2}$ it
	increases for $k\leq k'_{\alpha}$ and decreases for $k > k'_{\alpha}$.
\end{lemma}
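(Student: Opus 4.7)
The plan is to mimic the ratio analysis already carried out successfully in Lemma 2: form the quotient $b_k/b_{k+1}$, simplify it into a rational function of $k$, and then read off the two monotonicity regimes from the sign of a linear expression in $k$.

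First I would write
\[
\frac{b_k}{b_{k+1}} \;=\; \frac{\binom{2\alpha+2k-1}{2k}}{\binom{2\alpha+2k+1}{2k+2}} \cdot \frac{\binom{\alpha+k}{k+1}}{\binom{\alpha+k-1}{k}} \cdot \frac{k!}{(k+1)!} \cdot \frac{\Gamma(k+\alpha+n+\frac{5}{2})}{\Gamma(k+\alpha+n+\frac{3}{2})}
\]
and simplify each factor separately: the first is $\frac{(2k+1)(2k+2)}{(2\alpha+2k)(2\alpha+2k+1)}$, the second is $\frac{\alpha+k}{k+1}$, the third is $\frac{1}{k+1}$, and the fourth is $k+\alpha+n+\tfrac{3}{2}$. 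The factor $\alpha+k$ cancels against the $2(\alpha+k)$ hidden in $2\alpha+2k$, and one factor of $k+1$ cancels as well, leaving the clean expression
\[
\frac{b_k}{b_{k+1}} \;=\; \frac{(2k+1)\bigl(k+\alpha+n+\tfrac{3}{2}\bigr)}{(2\alpha+2k+1)(k+1)}.
\]

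Next I would determine when this ratio exceeds $1$. Cross-multiplying and expanding both products, the quadratic terms in $k$ match, and after collecting the linear and constant terms I obtain the equivalent inequality
\[
(2n+1)k + \bigl(n+\tfrac{1}{2}\bigr) \;>\; \alpha,
\]
that is, $k > \tfrac{\alpha}{2n+1} - \tfrac{1}{2}$. This single inequality drives both claims: it holds for every $k\ge 0$ precisely when $\alpha \le n+\tfrac{1}{2}$ (in the boundary case $\alpha = n+\tfrac{1}{2}$ one has $b_0=b_1$ and strict decrease afterwards), while for $\alpha > n+\tfrac{1}{2}$ the linear function of $k$ crosses $\alpha$ at exactly
\[
k'_\alpha \;=\; \Bigl\lceil \tfrac{\alpha}{2n+1}-\tfrac{1}{2}\Bigr\rceil,
\]
so $b_k < b_{k+1}$ for $k < k'_\alpha$ and $b_k \ge b_{k+1}$ for $k \ge k'_\alpha$, exactly as stated.

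There is no real obstacle here beyond the algebraic bookkeeping; the structure is identical to Lemma 2 and the only thing one has to be careful about is the cancellation $\alpha+k$ against $2\alpha+2k = 2(\alpha+k)$ in the binomial ratio for $\binom{2\alpha+2k+1}{2k+2}$, which is what produces the tidy form above. Once this is in place, identifying $k'_\alpha$ as the smallest nonnegative integer strictly larger than $\tfrac{\alpha}{2n+1}-\tfrac{1}{2}$ finishes the argument and matches the definition appearing in Theorem 2.
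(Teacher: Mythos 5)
Your proposal is correct and follows essentially the same route as the paper: both reduce the claim to the ratio of consecutive terms, simplify it to $\frac{(2k+1)(k+\alpha+n+\frac{3}{2})}{(2\alpha+2k+1)(k+1)}$ (the paper works with the reciprocal $b_{k+1}/b_k$, which is immaterial), and read off the monotonicity regimes from the equivalent linear condition $\alpha \lessgtr k(2n+1)+n+\frac{1}{2}$. Your bookkeeping of the binomial cancellations and the identification of $k'_\alpha$ both check out.
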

\begin{proof}
	For $ b_k= \frac{k!\binom{2k+2\alpha-1}{2k}}{\Gamma(k+n+\alpha+\frac{3}{2})\binom{k+\alpha-1}{k}}$
we easily calculate 
$$ \frac{b_{k+1}}{b_{k}}=\frac{(2k+2\alpha+1)(k+1)}{(2k+1)(k+n+\alpha+\frac{3}{2})},$$
which is $\leq 1$ iff $\alpha \leq n+\frac{1}{2}+k(2n+1).$
This holds for all $k \geq 0,$ iff $\alpha \leq n+\frac{1}{2}$. In case $\alpha>n+\frac{1}{2},$
the sequence $b_k$ increases in $k$, for $k \leq k'_{\alpha}$ and decreases for $k \geq k'_{\alpha}$, where
$$ k'_{\alpha}= \left\lceil \frac{\alpha}{2n+1}-\frac{1}{2} \right\rceil $$
So, for $ \alpha \leq n+\frac{1}{2}$ the sequence is decreasing and $b_k\leq b_0$ for all $ k\in \mathbb{N},$ while  for $\alpha >n+\frac{1}{2}$, we have 
$$b_k \leq b_{k'_\alpha}=\binom{2k'_{\alpha}+2\alpha-1}{2k'_{\alpha}} {\binom{k'_{\alpha}+\alpha-1}{k'_{\alpha}}}^{-1} \frac{(k'_{\alpha})!}{\Gamma(k'_{\alpha}+\alpha+n+\frac{3}{2})}.$$

\end{proof}

As a consequence of the previous Lemma, we conclude:

For $\alpha \leq n+\frac{1}{2}$ all the above estimates is sharp and we have
$$(1-|z|^2)\left|\nabla (B_{\alpha}f)(z)\right| \leq 2(n+1)c_{\alpha}\pi^{n-1}\frac{\Gamma(\alpha+1)\Gamma(\frac{1}{2})}{\Gamma(\alpha+n+\frac{3}{2})}\|f\|_{\infty}=$$
$$\frac{2}{\pi}(n+1) B(\alpha+n+1,\frac{1}{2})\|f\|_{\infty}.$$

For $\alpha > n+\frac{1}{2}$ we have the estimate:
\begin{multline*}
	(1-|z|^2)\left|\nabla (B_{\alpha}f)(z)\right| < \frac{2}{\sqrt{\pi}}(n+1)\binom{2k'_{\alpha}+2\alpha-1}{2k'_{\alpha}}\\ {\binom{k'_{\alpha}+\alpha-1}{k'_{\alpha}}}^{-1} \frac{(k'_{\alpha})!\Gamma(\alpha+n+1)}{\Gamma(k'_{\alpha}+\alpha+n+\frac{3}{2})} \|f\|_{\infty}.
\end{multline*}	

It is important to say that choosing $z=(1,0,\dots,0)$ and $r=0$ we achieve the equality in the first estimate for $f(z)=\frac{|Re z_1|}{Re z_1}.$ Also, it is clear that we have

\par \textbf{Acknowledgements}. I would like to thank prof.~dr Miloš Arsenović, for his useful comments and
helpful discussions and dr Marijan Marković for the suggestion of the problem.

\end{document}